\newtheorem{theorem}{Theorem}[section]
\newtheorem{proposition}[theorem]{Proposition}
\theoremstyle{definition}
\theoremstyle{definition}
\theoremstyle{definition}
\numberwithin{equation}{section}
\numberwithin{table}{section}
\numberwithin{figure}{section}
\newcommand{\ep}{\varepsilon}
\begin{document}

\title{Global stability of a  multistrain SIS model\\ with superinfection and  patch structure}
\author[1]{Attila D\'enes}

\author[2]{Yoshiaki Muroya}

\author[1,3]{Gergely R\"ost}


\affil[1]{{Bolyai Institute}, {University of Szeged}, {{Hungary}}}

\affil[2]{{Department of Mathematics}, {Waseda University}, {{Japan}}}

\affil[3]{{Mathematical Institute}, {University of Oxford}, {{United Kingdom}}}
\maketitle

\abstract{We study the global stability of a  multistrain SIS model with superinfection and  patch structure.  
We establish an iterative procedure to obtain a sequence of threshold parameters.  By a repeated application of a result by Takeuchi et al.\ [\textit{Nonlinear Anal Real World Appl}.\ 2006;{7}:235--247], we   show that these parameters completely determine the global dynamics of the system: for any number of patches and strains with different infectivities,  any subset of the strains can stably coexist depending on the particular  choice of the parameters. Finally, we  return to the special case of one patch examined in [\textit{Math Biosci Eng}.\ 2017;14:421--435] and give a correction to the proof 
of Theorem 2.2 of that paper. }

\noindent\textbf{Keywords:} {multigroup epidemic model; patch model; multistrain model; global asymptotic stability}.

\noindent\textbf{MSC Classification: } 37B25; 37C70;  92D30.

\section{Introduction}
Several  viruses have  different genetic variants (subtypes)  called strains which may differ in their infectivity and  virulence. Stronger strains might superinfect an individual already infected by another strain and there can be a coexistence of different virus strains with different virulence. Nowak \cite{Nowak} considered a model to provide an analytical understanding of the complexities introduced by superinfection. 
In our earlier work\cite{DMR}, we considered a multistrain SIS model with superinfection with $n$ infectious strains and showed that it is possible to obtain a stable coexistence of any subgroup of the $n$ strains. We established an iterative method for calculating a sequence of reproduction numbers, which determine the strains being  present in the globally asymptotically stable coexistence equilibrium. 

Recently, there has been an increasing interest in the modelling of the spatial spread of infectious diseases (see e.g.\ Arino and Portet \cite{arino}, Knipl~\cite{Knipl:2014}, Knipl and R\"ost \cite{Knipl:2016}, Muroya, Kuniya and Enatsu \cite{Muroya:2016}, Nakata and R\"ost \cite{Nakata}). There are several ways to model spatial spread: one might use partial differential equations (see e.g.\ Peng and Zhao \cite{pengzhao}, Allen et al.~\cite{allen}, Ge et al.~\cite{huaiping}) or one may apply ordinary or functional differential equations  where individuals can travel between different patches (countries, regions, cities etc.).

 Marv\'a et al.\ \cite{Marva:2012} considered a spatially distributed periodic multistrain SIS epidemic model with patches of periodic migration rates without superinfection. Considering global reproduction numbers in the non-spatialized aggregated
system that serve to decide the eradication or endemicity of the epidemic in the initial spatially distributed nonautonomous model, and comparing these global reproductive numbers with those corresponding to isolated patches, they showed that adequate periodic fast migrations can in many cases reverse local endemicity and get global eradication of the epidemic.

Motivated by our earlier work on multistrain models and by the recent results on spatial spread of diseases, we extend our previous model \cite{DMR} to the general case of $p$ patches. In Section 2, we establish a  multistrain SIS model with superinfection with $n$ infectious strains and patch structure.  In Section 3, we establish an iterative procedure to determine the globally asymptotically stable equilibrium of  the multipatch model introduced in Section 2. In Section 4, we turn to the case $p=1$, studied in D\'enes, Muroya and R\"ost \cite{DMR} and give a correction of the proof of Theorem 2.2 of that paper. 

\section{The model}\label{sec2}
We consider a heterogeneous virus population with $n$ virus strains having  different infectivities and virulences.   We will assume that superinfection is possible, and more virulent strains
outcompete the less virulent ones in an infected individual taking over the host completely, i.e.\ we assume that an infected individual is always infected by only one virus strain. Let $n$ denote the number of strains with different virulences while $p$ stands for the number of patches. On each patch, the population is divided into $n + 1$ compartments depending on the presence of any of the virus strains: the susceptible class of patch $\ell$ is denoted by $S^\ell(t)$ and on each patch $\ell$, there are $n$ infected compartments $T^\ell_1,\dots,T^\ell_n$ where a larger index corresponds to a compartment of individuals infected by a strain with larger virulence, so for $i<j$,  $T_j$ individuals superinfect $T_i$ individuals. Let $B^\ell$ denote the birth rate and $b^\ell$ the death rate on the $\ell$th patch.
We denote by  $\beta^\ell_{kj}$  the transmission rate on patch $\ell$ by which the $k$th strain infects those who are infected by the $j$th strain. The transmission rates from susceptibles to
strain $k$ on patch $\ell$ will be denoted by $\beta^\ell_{kk}$.  Recovery rate on patch $\ell$ among those infected by
the $k$th strain will be denoted by $\theta_k^\ell$.  By $m_{\ell i}$ we denote the travel rate from patch $i$ to $\ell$, which, on a given  patch is equal for all compartments on that patch.
 Using these notations, we consider the following  multistrain  SIS model with superinfection and patch structure:
\begin{equation}\label{eq-n}
\begin{split}
\frac{\mathrm{d}S^\ell(t)}{\mathrm{d}t}={}&B^\ell-b^\ell S^\ell(t)- S^\ell(t) \sum_{k=1}^n \beta_{kk}^\ell T_k^\ell(t)+\sum_{k=1}^n \theta_k^\ell T_k^\ell(t) +\sum_{i=1}^p(1-\delta_{\ell i})\left\{m_{\ell i} S^i(t)-m_{i\ell} S^\ell(t)\right\}, \\
\frac{\mathrm{d}T_k^\ell(t)}{\mathrm{d}t}={}& S^\ell(t) \beta_{kk}^\ell T_k^\ell(t)+T_k^\ell(t) \sum_{j=1}^n (1-\delta_{kj}) \beta_{kj}^\ell T_j^\ell(t)-\left(b^\ell+\theta_k^\ell\right) T_k^\ell(t)+\sum_{i=1}^p (1-\delta_{\ell i})\left\{m_{\ell i} T_k^i(t)-m_{i\ell} T_k^\ell(t)\right\},
\\
& k=1,2,\dots,n, \quad \ell=1,2,\ldots,p,
\end{split}
\end{equation}
with initial conditions
\begin{equation}\label{init}
\begin{split}
&\displaystyle{S^\ell\left( 0\right) =\phi_0^\ell, \qquad T_k^\ell\left( 0\right) =\phi_k^\ell, \qquad k=1,2,\dots,n, \quad \ell=1,2,\ldots,p}, \\
&\displaystyle{\left(\phi_0^1,\phi_1^1,\phi_2^1,\dots,\phi_n^1,\phi_0^2,\phi_1^2,\phi_2^2,\dots,\phi_n^2,\dots, \phi_0^p,\phi_1^p,\phi_2^p,\dots,\phi_n^p \right) \in {\mathbb R}_{+}^{(n+1)p}}\eqqcolon\Gamma,
\end{split}
\end{equation}
where $\delta_{kj}$ denotes the Kronecker delta such that $\delta_{kj}=1$ if $k=j$ and $\delta_{kj}=0$ otherwise, and
where
\begin{equation}\label{cond-0}
\begin{aligned}
\beta_{kj}^\ell&=\beta_{kk}^\ell, &\quad 1 &\leq j \leq k, \quad
 \mbox{and} \\
  \beta_{kj}^\ell&=-\beta_{jj}^\ell, &\quad k+1 &\leq j \leq n, \quad k=1,2,\ldots,n, \quad \ell=1,2,\ldots,p.
\end{aligned}
\end{equation}

Note that for $n=2$ and $p=1$, \eqref{eq-n} corresponds to the model by A.~D\'enes and G.~R\"ost describing the spread of ectoparasites and ectoparasite-borne diseases  \cite{Denes:2012,Denes:2014a}, while for $p=1$, it corresponds to the multistrain SIS model by A.~D\'enes, Y.~Muroya and G.~R\"ost~\cite{DMR}.

\section{Main result}\label{secmain}

Let us introduce the notation 
\begin{equation}\label{Nk}
N_n^\ell(t)=S^\ell(t)+\sum_{j=1}^n T_j^\ell(t), \qquad  \ell=1,2,\ldots,p.
\end{equation}
Then, by \eqref{cond-0}, we have $\beta_{kj}^\ell=-\beta_{jk}^\ell$ for $k \neq j$ and hence, $$\displaystyle{\sum_{k=1}^n T_k^\ell(t) \sum_{j=1}^n (1-\delta_{kj}) \beta_{kj}^\ell T_j^\ell(t)=0, \qquad \ell=1,2,\ldots,p}.$$ 
Thus, \eqref{eq-n} is equivalent to
\allowdisplaybreaks
\begin{subequations}\label{eq-n-1}
\begin{align}
\begin{split}\label{eq-n-1-1}
\frac{\mathrm{d}T_k^\ell(t)}{\mathrm{d}t}={}&\biggl(N_n^\ell(t)-\sum_{j=1}^n T_j^\ell(t)\biggr) \beta_{kk}^\ell T_k^\ell(t)+T_k^\ell(t)\sum_{j=1}^n (1-\delta_{kj}) \beta_{kj}^\ell T_j^\ell(t) -\left(b^\ell+\theta_k^\ell\right) T_k^\ell(t) 
\\&+\sum_{i=1}^p(1-\delta_{\ell i})\left\{m_{\ell i} T_k^i(t)-m_{i\ell} T_k^\ell(t)\right\}, \qquad k=1,2,\ldots,n-1, 
\end{split}\\
\begin{split}\label{eq-n-1_Tn}
\frac{\mathrm{d}T_n^\ell(t)}{\mathrm{d}t}={}&\biggl(N_n^\ell(t)-\sum_{j=1}^n T_j^\ell(t)\biggr) \beta_{nn}^\ell T_n^\ell(t)+T_n^\ell(t)\sum_{j=1}^n (1-\delta_{nj}) \beta_{nj}^\ell T_j^\ell(t) \\
&-\left(b^\ell+\theta_n^\ell\right) T_n^\ell(t)+\sum_{i=1}^p(1-\delta_{\ell i})\left\{m_{\ell i} T_n^i(t)-m_{i\ell} T_n^\ell(t)\right\} \\
={}&T_n^\ell(t)\biggl(\beta_{nn}^\ell N_n^\ell(t)-\sum_{j=1}^n \left\{\beta_{nn}^\ell-(1-\delta_{nj})\beta_{nj}^\ell\right\} T_j^\ell(t)-\left(b^\ell+\theta_n^\ell\right)\biggr) +\sum_{i=1}^p(1-\delta_{\ell i})\left\{m_{\ell i} T_n^i(t)-m_{i\ell} T_n^\ell(t)\right\}, \\
={}& T_n^\ell(t)\biggl(\beta_{nn}^\ell N_n^\ell(t)- \beta_{nn}^\ell T_n^\ell(t)-\left(b^\ell+\theta_n^\ell\right)\biggr) +\sum_{i=1}^p(1-\delta_{\ell i})\left\{m_{\ell i} T_n^i(t)-m_{i\ell} T_n^\ell(t)\right\}, 
\end{split}\\
\begin{split}\label{eq-n-1_N}
\frac{\mathrm{d}N_n^\ell(t)}{\mathrm{d}t}={}&B^\ell-b^\ell N_n^\ell(t)+\sum_{i=1}^p(1-\delta_{\ell i})\left\{m_{\ell i} N_n^i(t)-m_{i\ell} N_n^\ell(t)\right\},\\
& \ell=1,2,\ldots,p.
\end{split}
\end{align}
\end{subequations}
The equations \eqref{eq-n-1_Tn}--\eqref{eq-n-1_N} are clearly independent from the rest of the equations. In particular, the equations \eqref{eq-n-1_N} are also independent from the equations \eqref{eq-n-1_Tn}. As the coefficient matrix $A$ of the linear system of equations 
$$
\begin{pmatrix}B^1\\\vdots\\B^p
\end{pmatrix}=
\begin{pmatrix}
b^1+\sum_{i=1}^{p}(1-\delta_{1i})m_{i1}&-m_{12}&\cdots&-m_{1p}\\
-m_{21}&b^2+\sum_{i=1}^{p}(1-\delta_{2i})m_{i2}&\cdots&-m_{2p}\\
\vdots&\vdots&\ddots&\vdots\\
-m_{p1}&-m_{p2}&\cdots& b^p+\sum_{i=1}^{p}(1-\delta_{pi})m_{ip}
\end{pmatrix}\begin{pmatrix}
N_n^{1}\\ \vdots\\N_n^{p}
\end{pmatrix}$$
is a strictly diagonally dominant $Z$-matrix, it is nonsingular and its inverse is positive, hence, this algebraic system has a unique, positive solution
$$
\begin{pmatrix}
N_n^{1*}\\ \vdots\\N_n^{p*}
\end{pmatrix}=A^{-1}\begin{pmatrix}B^1\\\vdots\\B^p
\end{pmatrix}.$$
Let us define $P_\ell(t)\coloneqq N^\ell(t)-N^{\ell*},\ \ell=1,\dots,p$, then for $P_\ell'(t)$, we have the equation
\begin{equation}\label{linP}
\frac{\mathrm{d}}{\mathrm{d}t}\begin{pmatrix}
P_1(t)\\ \vdots\\P_p(t)
\end{pmatrix}=-A\begin{pmatrix}
P_1(t)\\ \vdots\\P_p(t)
\end{pmatrix}.
\end{equation}
From the properties of the matrix $-A$, applying the Gershgorin circle theorem, we obtain that $P_\ell(t)\to 0$ exponentially as $t\to\infty, \ \ell=1,\dots,p$.
Hence, for the equations \eqref{eq-n-1_N}, there exist positive constants $N_n^{\ell*}, \ \ell=1,2,\ldots,p$ such that   
\begin{equation}\label{limit1}
\lim_{t \to +\infty}N_n^\ell(t)=N_n^{\ell*}, \qquad \ell=1,2,\ldots,p,
\end{equation}
exponentially and  \eqref{eq-n-1_Tn} has the following limit  system:
\begin{equation}\label{eq-n-2_2}
\begin{split}
\frac{\mathrm{d}T_n^\ell(t)}{\mathrm{d}t}={}&T_n^\ell(t)\left(\beta_{nn}^\ell N_n^{\ell*}-\left(b^\ell+\theta_n^\ell\right)-\beta_{nn}^{\ell}T_n^{\ell}(t)\right) 
+\sum_{i=1}^p(1-\delta_{\ell i})\left\{m_{\ell i} T_n^i(t)-m_{i\ell} T_n^\ell(t)\right\}, \qquad \ell =1,2,\ldots,p,
\end{split}
\end{equation}
which is a $p$-dimensional Lotka--Volterra system with patch structure, in the form as Equation (2.1) in Takeuchi et al.~\cite{Takeuchi:2006}

We introduce the notation
\begin{align*}
\tilde{m}_{ii}&=\sum_{\ell=1}^p(1-\delta_{i\ell})m_{i\ell}, \qquad i=1,2,\ldots,p, 
\end{align*}
and define the connectivity matrix
\begin{align*}
M&=
\begin{bmatrix}
	-\tilde{m}_{11}&m_{12}&\cdots&m_{1p} \\
	m_{21}&-\tilde{m}_{22}&\cdots&m_{2p} \\
	\vdots&\vdots&\ddots&\vdots \\
	m_{p1}&m_{p2}&\cdots&-\tilde{m}_{pp} \\
\end{bmatrix}. 
\end{align*}

Now we define
\begin{align*}
c_n^\ell&=\beta_{nn}^\ell N_n^{\ell*}-(b^\ell+\theta_n^\ell), \qquad \ell=1,2,\ldots,p,
\shortintertext{and}
M_n&=
\begin{bmatrix}
c_n^1-\tilde{m}_{11}&m_{12}&\cdots&m_{1p} \\
m_{21}&c_n^2-\tilde{m}_{22}&\cdots&m_{2p} \\
\vdots&\vdots&\ddots&\vdots \\
m_{p1}&m_{p2}&\cdots&c_n^p-\tilde{m}_{pp} \\
\end{bmatrix}. 
\end{align*}
Let us denote by $s(L)$ the stability modulus of a $p \times p$ matrix $L$, defined by
$s(L) \coloneqq \max\{\operatorname{Re} \lambda : \lambda  \mbox{ is an eigenvalue} \mbox{ of} \ L \}$.
If $L$ has nonnegative off-diagonal elements and is irreducible, then $s(L)$ is a simple eigenvalue
of $L$ with a (componentwise) positive eigenvector (see, e.g., Theorem A.5 in Smith~\cite{Smith:1995}).

\begin{proposition}[see Theorem 2.1 in Takeuchi et al.~{\cite{Takeuchi:2006}}] \label{pro21}
Suppose that $M_n$ is irreducible. Then  equation \eqref{eq-n-2_2}
 has a positive equilibrium 
which is globally asymptotically stable if $s(M_n)>0$.  If $s(M_n) \leq 0$, then $0$ is a globally asymptotically stable equilibrium and the populations go extinct
in every patch.
\end{proposition}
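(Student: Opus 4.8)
The plan is to recognize \eqref{eq-n-2_2} as a cooperative (quasimonotone) system on the nonnegative orthant $\mathbb{R}_+^p$ whose per-patch reaction terms are concave, so that the classical dichotomy for sublinear monotone systems applies. First I would settle well-posedness: the nonnegative orthant is forward invariant, since on each face $T_n^\ell=0$ the only surviving term in the $\ell$th equation is $\sum_{i\neq\ell} m_{\ell i} T_n^i\ge 0$, so the vector field points inward; and the system is dissipative, because summing the equations makes the migration terms cancel pairwise and yields $\frac{\mathrm d}{\mathrm dt}\sum_\ell T_n^\ell\le \bigl(\max_\ell c_n^\ell\bigr)\sum_\ell T_n^\ell-\bigl(\min_\ell\beta_{nn}^\ell\bigr)\sum_\ell (T_n^\ell)^2$, which is negative for large $\|T_n\|$ (using $\beta_{nn}^\ell>0$). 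Hence all solutions exist for all $t\ge 0$ and enter a fixed compact set.

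Next I would linearize at the origin: the Jacobian there is exactly $M_n=\operatorname{diag}(c_n^\ell)+M$, whose off-diagonal entries $m_{\ell i}$ are nonnegative, and which is assumed irreducible. By the Perron--Frobenius theory for such matrices (Theorem A.5 in Smith~\cite{Smith:1995}), $s(M_n)$ is a simple eigenvalue with a positive eigenvector $v\gg 0$. When $s(M_n)\le 0$, I would argue by comparison: since $T_n^\ell\bigl(c_n^\ell-\beta_{nn}^\ell T_n^\ell\bigr)\le c_n^\ell T_n^\ell$, every solution satisfies $\dot T_n\le M_n T_n$ componentwise, so $0\le T_n(t)\le e^{M_n t}T_n(0)$; the bound $s(M_n)<0$ gives $e^{M_n t}T_n(0)\to 0$ directly, and in the boundary case $s(M_n)=0$ one uses that the reaction term strictly dominates (not merely weakly) away from the coordinate axes, together with irreducibility of the migration network, to rule out persistence. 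This yields global attractivity of the origin.

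For the remaining case $s(M_n)>0$ I would invoke the theory of monotone dynamical systems. The vector field is cooperative, and because $\partial f_\ell/\partial T_n^i=m_{\ell i}$ for $i\neq\ell$ has the same zero pattern as the irreducible matrix $M_n$, the induced flow is strongly monotone on the interior of $\mathbb{R}_+^p$. Moreover each $f_\ell$ is strictly concave in $T_n^\ell$ and affine in the remaining variables; since affine maps are simultaneously concave and convex, $f$ is globally strictly sublinear, $f(\lambda T_n)>\lambda f(T_n)$ for $0<\lambda<1$, $T_n\gg 0$. For such systems the standard dichotomy (Smith~\cite{Smith:1995}; the argument of Takeuchi et al.~\cite{Takeuchi:2006}) gives: because $s(M_n)>0$ makes the origin a repeller along $v$, there is a unique positive equilibrium $T_n^{\ast}\gg 0$ attracting every solution with nonzero initial data. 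Concretely one brackets an arbitrary initial condition between $\varepsilon v$ (for small $\varepsilon$) and a large super-equilibrium, squeezes the trajectory between the resulting monotone lower and upper solutions, and uses sublinearity to force their common limit to be $T_n^{\ast}$.

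I expect the main obstacle to be precisely the case $s(M_n)>0$ — establishing uniqueness of the positive equilibrium and excluding nontrivial periodic orbits — which is where concavity/sublinearity is indispensable; one must take care that concavity holds only in the diagonal direction while the coupling is linear, but, as noted, linear terms preserve the sublinearity inequality globally. The boundary case $s(M_n)=0$ is a secondary technical point, handled by the strict domination of the reaction term off the coordinate axes combined with irreducibility of $M$.
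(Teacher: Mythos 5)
This proposition is not proved in the paper at all: it is imported verbatim from Theorem~2.1 of Takeuchi et al.~\cite{Takeuchi:2006}, with the boundary case $s(M_n)=0$ delegated to Theorem~2.2 of Faria~\cite{Faria:2013}. Your reconstruction is essentially correct and follows the same standard route as the cited source: invariance and dissipativity of $\mathbb{R}_+^p$, identification of the Jacobian at the origin with $M_n$, linear comparison for extinction, and the theory of strongly monotone, strictly sublinear (cooperative, concave) systems for the existence, uniqueness and global attractivity of the positive equilibrium when $s(M_n)>0$. Your verification that the linear coupling terms do not destroy strict sublinearity is exactly the right point to check, and the bracketing between $\varepsilon v$ (a strict subequilibrium since $f(\varepsilon v)=\varepsilon s(M_n)v+O(\varepsilon^2)\gg 0$) and a large superequilibrium is the standard closing step.

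The one step that is genuinely under-argued is the case $s(M_n)=0$, where you appeal informally to ``strict domination of the reaction term off the coordinate axes.'' The comparison $0\le T_n(t)\le e^{M_n t}T_n(0)$ only yields boundedness there. A clean way to finish both subcases at once is to take the positive left eigenvector $u\gg 0$ of $M_n$ (which exists by Perron--Frobenius, Theorem~A.5 in~\cite{Smith:1995}, applied to $M_n^{\mathsf T}$) and set $V=u^{\mathsf T}T_n$; then
\begin{equation*}
\frac{\mathrm{d}V}{\mathrm{d}t}=s(M_n)\,u^{\mathsf T}T_n-\sum_{\ell=1}^{p}u_\ell\,\beta_{nn}^\ell\bigl(T_n^\ell\bigr)^2\le -\sum_{\ell=1}^{p}u_\ell\,\beta_{nn}^\ell\bigl(T_n^\ell\bigr)^2\le 0
\end{equation*}
when $s(M_n)\le 0$, with equality only at the origin, and LaSalle's invariance principle gives global attractivity of $0$; this is in substance what Faria's theorem supplies. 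With that replacement your argument is complete and matches the proof strategy of the result the paper merely cites.
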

Note that we may take that the populations go extinct in every patch  not only if $s(M_n)<0$ but also if $s(M_n)=0$ (see Theorem~2.2 of Faria \cite{Faria:2013}). 

Let $E_n^*=(T_n^{1*},T_n^{2*},\ldots,T_n^{p*})$ be the unique equilibrium of \eqref{eq-n-2_2} which is globally asymptotically stable. Then, $E_n^*=(0,0,\ldots,0)$ if $s(M_n) \leq 0$, and $E_n^*=(T_n^{1*},T_n^{2*},\ldots,T_n^{p*})$ satisfies $T_n^{\ell*}>0, \ \ell=1,2,\ldots,p$, if $s(M_n)>0$. Therefore, in the first case, the unique equilibrium of \eqref{eq-n-2_2},  is globally asymptotically stable on $\{(T_n^1,T_n^2,\ldots,T_n^p) \in \mathbb{R}_+^p \}$, while in the second case, the unique positive equilibrium $E_n^*=(T_n^{1*},T_n^{2*},\ldots,T_n^{p*})$ with $T_n^{\ell*}>0, \ \ell=1,2,\ldots,p$ is globally asymptotically stable with respect to $\{(T_n^1,T_n^2,\ldots,T_n^p) \in \mathbb{R}_+^p \}\setminus \{(0,0,\ldots,0)\}$. Let us introduce the notations 
\begin{align*}
N_{n-1}^\ell(t)&=S^\ell(t)+\sum_{j=1}^{n-1} T_j^\ell(t), \qquad  \ell=1,2,\ldots,p,
\shortintertext{and} 
		b^\ell_{(1)}&=b^{\ell}-\beta_{kn}^\ell T_n^{\ell*}=b^{\ell}+\beta_{nn}^\ell T_n^{\ell*}, \qquad k=1,2,\ldots,n-1,\quad \ell=1,2,\ldots,p,
		 \shortintertext{and}  
		  B^\ell_{(1)}&=B^{\ell}+\theta_{n}^\ell T_n^{\ell*}, \qquad \ell=1,2,\ldots,p,
\end{align*}
where 
$(T_n^{1*},\dots,T_n^{1*})$ is either equal to $(0,\dots,0)$ (if $s(M_n) \leq 0$) or it is equal to the unique positive equilibrium of \eqref{eq-n-2_2} (if $s(M_n) > 0$). This way, substituting $T_n^{i*},\ 1=1,\dots,p$ into the place of $T_n^i(t)$ in \eqref{Nk} and \eqref{eq-n-1},
we may consider the following reduced system of \eqref{eq-n-1} for the global stability of \eqref{eq-n}:  
\allowdisplaybreaks
\begin{subequations}\label{eq-(n-1)-1}
\begin{align}
\begin{split}
\frac{\mathrm{d}T_k^\ell(t)}{\mathrm{d}t}={}&\biggl(N_{n-1}^\ell(t)-\sum_{j=1}^{n-1} T_j^\ell(t)\biggr) \beta_{kk}^\ell T_k^\ell(t)+T_k^\ell(t)\sum_{j=1}^{n-1} (1-\delta_{kj}) \beta_{kj}^\ell T_j^\ell(t)-\left(b^\ell_{(1)}+\theta_k^{\ell}\right) T_k^\ell(t) \\&+\sum_{i=1}^p(1-\delta_{\ell i})\left\{m_{\ell i} T_k^i(t)-m_{i\ell} T_k^\ell(t)\right\}, \qquad k=1,2,\ldots,n-2,
\end{split}\\
\begin{split}
\frac{\mathrm{d}T_{n-1}^\ell(t)}{\mathrm{d}t}={}&\biggl(N_{n-1}^\ell(t)-\sum_{j=1}^{n-1} T_j^\ell(t)\biggr) \beta_{n-1,n-1}^\ell T_{n-1}^\ell(t) +T_{n-1}^\ell(t)\sum_{j=1}^{n-1} (1-\delta_{n-1,j}) \beta_{n-1,j}^\ell T_j^\ell(t)-\left(b^\ell_{(1)}+\theta_{n-1}^{\ell}\right) T_{n-1}^\ell(t) \\
& +\sum_{i=1}^p (1-\delta_{\ell i})\left\{m_{\ell i} T_{n-1}^i(t)-m_{i\ell} T_{n-1}^\ell(t)\right\}\\
={}&T_{n-1}^\ell(t)\biggl(\beta_{n-1,n-1}^\ell N_{n-1}^\ell(t)  -\beta_{n-1,n-1}^\ell T_{n-1}^\ell(t)-\left(b^\ell_{(1)}+\theta_{n-1}^{\ell}\right)\biggr) \\
 &+\sum_{i=1}^p(1-\delta_{\ell i})\left\{m_{\ell i} T_{n-1}^i(t)-m_{i\ell} T_{n-1}^\ell(t)\right\}, 
 \end{split}\\
 \begin{split}\label{eq-(n-1)-1-N}
 \frac{\mathrm{d}N_{n-1}^\ell(t)}{\mathrm{d}t}={}&B^\ell_{(1)}-b^\ell_{(1)} N_{n-1}^\ell(t)+\sum_{i=1}^p(1-\delta_{\ell i})\left\{m_{\ell i} N_{n-1}^i(t)-m_{i\ell} N_{n-1}^\ell(t)\right\}, \\
&\ell=1,2,\ldots,p.
\end{split}
\end{align}
\end{subequations}
It is easy to see that  \eqref{eq-(n-1)-1}  is of similar structure as \eqref{eq-n-1}, but with dimension $p(n-1)+1$. The positivity of the new parameters follows from the conditions \eqref{cond-0}. This means that by repeating the above steps, namely, substituting the limit of the total populations in the patches and then substituting the limit of the Lotka--Volterra system for the strongest strain, we  can further reduce the dimension by substituting the values of the equilibrium which is globally asymptotically stable, of the decoupled $p$ dimensional Lotka--Volterra system into the remaining equations. 

In general, after performing the above steps $q$ times, we arrive at the system
\allowdisplaybreaks
\begin{subequations}\label{eq-(n-q)-1}
\begin{align}\label{18a}
\begin{split}
\frac{\mathrm{d}T_k^\ell(t)}{\mathrm{d}t}={}&\biggl(N_{n-q}^\ell(t)-\sum_{j=1}^{n-q}T_j^\ell(t)\biggr) \beta_{kk}^\ell T_k^\ell(t)+T_k^\ell(t)\sum_{j=1}^{n-q} (1-\delta_{kj}) \beta_{kj}^\ell T_j^\ell(t)\\
&-\left(b^\ell_{(q)}+\theta_k^{\ell}\right) T_k^\ell(t)  +\sum_{i=1}^p (1-\delta_{\ell i})\left\{m_{\ell i} T_k^i(t)-m_{i\ell } T_k^\ell(t)\right\}, \\
 &k=1,2,\ldots,n-q-1, 
\end{split}\\
\begin{split}\label{2.15b}
\frac{\mathrm{d}T_{n-q}^\ell(t)}{\mathrm{d}t}={}&\biggl(N_{n-q}^\ell(t)-\sum_{j=1}^{n-q} T_j^\ell(t)\biggr) \beta_{n-q,n-q}^\ell T_{n-q}^\ell(t) \displaystyle{+T_{n-q}^\ell(t)\sum_{j=1}^{n-q} (1-\delta_{n-q,j}) \beta_{n-q,j}^\ell T_j^\ell(t)-\left(b^\ell_{(q)}+\theta_{n-q}^{\ell}\right) T_{n-q}^\ell(t)} \\
& \displaystyle{+\sum_{i=1}^p (1-\delta_{\ell i})\left\{m_{\ell i} T_{n-q}^i(t)-m_{i\ell} T_{n-q}^\ell(t)\right\}}, \\
={}&T_{n-q}^\ell(t)\biggl(\beta_{n-q,n-q}^\ell N_{n-q}^\ell(t)  -\beta_{n-q,n-q}^\ell T_{n-q}^\ell(t)-\left(b^\ell_{(q)}+\theta_{n-q}^{\ell}\right)\biggr) \\
&+\sum_{i=1}^p (1-\delta_{\ell i})\left\{m_{\ell i} T_{n-q}^i(t)-m_{i\ell} T_{n-q}^\ell(t)\right\}, 
\end{split}\\
\begin{split}\label{eq-(n-q)-1-N}
\frac{\mathrm{d}N_{n-q}^\ell(t)}{\mathrm{d}t}={}&B^\ell_{(q)}-b^\ell_{(q)} N_{n-q}^\ell(t)+\sum_{i=1}^p (1-\delta_{\ell i})\left\{m_{\ell i} N_{n-q}^i(t)-m_{i\ell} N_{n-q}^\ell(t)\right\}, \\
&  \ell=1,2,\ldots,p,
\end{split}
\end{align}
\end{subequations}
where
$$N_{n-q}^\ell(t)=S^\ell(t)+\sum_{j=1}^{n-q}T_j^\ell(t),\qquad \ell=1,2,\ldots,p,$$ and
\begin{equation*}
\begin{split}
b^\ell_{(q)}&=b^{\ell}_{(q-1)}-\beta_{k,{n-q+1}}^\ell T_{n-q+1}^{\ell*}=b^{\ell}_{(q-1)}+\beta_{n-q+1,{n-q+1}}^\ell T_{n-q+1}^{\ell*}, \qquad k=1,2,\ldots,n-q,\quad \ell=1,2,\ldots,p\\
B^\ell_{(q)}&=B^{\ell}_{(q-1)}+\theta_{n-q+1}^\ell T_{n-q+1}^{\ell*}, \qquad  \ell=1,2,\ldots,p. 
\end{split}
\end{equation*}
From the  equations  \eqref{eq-(n-q)-1-N}, similarly as before, there exist positive constants $N_{n-q}^{\ell*}, \ \ell=1,2,\ldots,p$ 
 such that   
\begin{equation}\label{limitk}
\lim_{t \to +\infty}N_{n-q}^\ell(t)=N_{n-q}^{\ell*}, \qquad \ell=1,2,\ldots,p,
\end{equation}
and  \eqref{eq-(n-q)-1} has the following reduced limit system:	
\begin{subequations}\label{eq-(n-q)-2_1}
\begin{align}\label{eq-(n-q)-2}
\begin{split}
\frac{\mathrm{d}T_k^\ell(t)}{\mathrm{d}t}={}&\biggl( N_{n-q}^\ell(t)-\sum_{j=1}^{n-q}T_j^\ell(t)\biggr) \beta_{kk}^\ell T_k^\ell(t)+T_k^\ell(t)\sum_{j=1}^{n-q} (1-\delta_{kj}) \beta_{kj}^\ell T_j^\ell(t)\\
&-\left(b^\ell_{(q)}+\theta_k^{\ell}\right) T_k^\ell(t) +\sum_{i=1}^p(1-\delta_{\ell i})\left\{m_{\ell i} T_k^i(t)-m_{i\ell} T_k^\ell(t)\right\}, \\ &k=1,2,\ldots,n-q-1, 
\end{split}\\
\begin{split}\label{2.17b}
\frac{\mathrm{d}T_{n-q}^\ell(t)}{\mathrm{d}t}={}&	T_{n-q}^\ell(t)\biggl(\beta_{n-q,n-q}^\ell N_{n-q}^{\ell*}-(b^\ell_{(q)}+\theta_{n-q}^{\ell}) -\beta_{n-q,n-q}^\ell T_{n-q}^\ell(t)\biggr) +\sum_{i=1}^p (1-\delta_{\ell i})\left\{m_{\ell i} T_{n-q}^i(t)-m_{i\ell} T_{n-q}^\ell(t)\right\}, \\& \ell=1,2,\ldots,p.
\end{split}
\end{align}
\end{subequations}
%

Let us define
\begin{align*}
M_{n-q}&=
\begin{bmatrix}
c_{n-q}^1-\tilde{m}_{11}&m_{12}&\cdots&m_{1{p}} \\
m_{21}&c_{n-q}^2-\tilde{m}_{22}&\cdots&m_{2p} \\
\vdots&\vdots&\ddots&\vdots \\
m_{p1}&m_{p2}&\cdots&c_{n-q}^p-\tilde{m}_{pp} \\
\end{bmatrix}, \\
\shortintertext{with}
c_{n-q}^\ell&=\beta_{n-q,n-q}^\ell N_{n-q}^{\ell*}-(b^\ell_{(q)}+\theta_{n-q}^{\ell}), \qquad \ell=1,2,\ldots,p.
\end{align*}
Again, \eqref{2.17b} can be  the decoupled from the rest of the equations as a $p$ dimensional Lotka--Volterra system with patch structure:
\begin{equation}\label{eq-(n-q)-2-new}
\begin{split}
\frac{\mathrm{d}T_{n-q}^\ell(t)}{\mathrm{d}t}={}&T_{n-q}^\ell(t)\biggl(\beta_{n-q,n-q}^\ell N_{n-q}^{\ell*}-\left(b^\ell_{(q)}+\theta_{n-q}^{\ell}\right) -\beta_{n-q,n-q}^\ell T_{n-q}^\ell(t)\biggr) \\
&+\sum_{i=1}^p (1-\delta_{\ell i})\left\{m_{\ell i} T_{n-q}^i(t)-m_{i\ell} T_{n-q}^\ell(t)\right\}, \qquad \ell=1,2,\ldots,p.
\end{split}
\end{equation}
Similarly as before, assuming the irreducibility of $M_{n-q}$, this system  has a globally attractive  equilibrium $(T_{n-q}^{1*},T_{n-q}^{2*},\ldots,T_{n-q}^{p*})$, which is either the trivial equilibrium if $s(M_{n-q})\leq 0$ or a positive equilibrium if $s(M_{n-q})>0$. 

Let us now define the new coefficients
\begin{align*}
b^\ell_{(q+1)}&=b^{\ell}_{(q)}-\beta_{k,{n-q}}^\ell T_{n-q}^{\ell*}=b^{\ell}_{(q)}+\beta_{{n-q},{n-q}}^\ell T_{n-q}^{\ell*}, \qquad k=1,2,\ldots,n-q-1,\quad \ell=1,2,\ldots,p \\
 \shortintertext{and}
 B^\ell_{(q+1)}&=B^{\ell}_{(q)}+\theta_{k}^\ell T_{n-q}^{\ell*}, \qquad k=1,2,\ldots,n-q-1,\quad \ell=1,2,\ldots,p. 
\end{align*}
and the new variables
$$N_{n-q-1}^\ell(t)=S^\ell(t)+\sum_{j=1}^{n-q-1}T_j^\ell(t),\qquad \ell=1,2,\ldots,p.$$
 We obtain the system
\begin{subequations}\label{eq-(n-q-1)-2}
\begin{align}
\begin{split}
\frac{\mathrm{d}T_k^\ell(t)}{\mathrm{d}t}={}&\biggl(N_{n-q-1}^\ell(t)-\sum_{j=1}^{n-q-1}T_j^\ell(t)\biggr) \beta_{kk}^\ell T_k^\ell(t)+T_k^\ell(t)\sum_{j=1}^{n-q-1} (1-\delta_{kj}) \beta_{kj}^\ell T_j^\ell(t)-\left(b^\ell_{(q+1)}+\theta_k^{\ell}\right) T_k^\ell(t)\\
& +\sum_{i=1}^p(1-\delta_{\ell i})\left\{m_{\ell i} T_k^i(t)-m_{i\ell} T_k^\ell(t)\right\}, \\
& k=1,2,\ldots,n-q-2, \quad \ell=1,2,\ldots,p,
\end{split}\\
\begin{split}
\frac{\mathrm{d}T_{n-q-1}^\ell(t)}{\mathrm{d}t}={}&T_{n-q-1}^\ell(t)\biggl(\beta_{n-q-1,n-q-1}^\ell N_{n-q-1}^{\ell}(t)-\left(b^\ell_{(q+1)}+\theta_{n-q-1}^{\ell}\right) -\beta_{n-q-1,n-q-1}^\ell T_{n-q-1}^\ell(t)\biggr) \\
&+\sum_{i=1}^p (1-\delta_{\ell i})\left\{m_{\ell i} T_{n-q-1}^i(t)-m_{i\ell} T_{n-q-1}^\ell(t)\right\}, \\& \ell=1,2,\ldots,p,
\end{split}\\
\begin{split}
\frac{\mathrm{d}N_{n-q-1}^\ell(t)}{\mathrm{d}t}={}&B_{(q+1)}^\ell-b_{(q+1)}^\ell N_{n-q-1}^\ell(t)+\sum_{i=1}^{p}(1-\delta_{\ell i})\left\{m_{\ell i}N_{n-q-1}^i(t)-m_{i\ell}N_{n-q-1}^\ell(t)\right\},\\
&\ell=1,2,\dots,p,
\end{split}
\end{align}
\end{subequations}
which again, is a system with the same structure. In the end, we arrive at a $p$ dimensional Lotka--Volterra system, the dynamics of which can be determined in a similar way as in the above case. This final system will give us an equilibrium value for $S^1(t)$ and $(T_1^1(t),T_2^1(t),\ldots,T_p^1(t))$. Thus, by the above discussion, we can reach a conclusion by induction to the global dynamics of the model \eqref{eq-n} and we formulate the following theorem.

\begin{theorem}\label{thm22}
Assume that the connectivity matrix $M$ is irreducible. Then the global dynamics of the multistrain, multipatch SIS model  \eqref{eq-n} is completely determined by the threshold parameters $(s(M_1),s(M_2),\ldots,s(M_n))$ which can be obtained iteratively. There exists an equilibrium in $\Gamma$ which is globally asymptotically stable with respect to the region $\Gamma_0$, where $\Gamma_0$ is the interior of~$\Gamma$.
\end{theorem}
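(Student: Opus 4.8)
The plan is to recast the hierarchical reduction already indicated above as a genuine induction on the number of strains~$n$, with Proposition~\ref{pro21} supplying the dynamics at each level and with the theory of asymptotically autonomous systems carrying each limiting system's conclusion back to the original one. Before starting the induction I would record two standing facts. First, $\Gamma$ and its interior $\Gamma_0$ are forward invariant, since componentwise positivity is preserved for $t>0$. Second, every solution is bounded: summing the equations \eqref{eq-n-1_N} over $\ell$ makes the migration terms cancel, so $\sum_\ell N_n^\ell$ obeys a scalar linear equation and therefore $\sum_\ell N_n^\ell$ --- hence every $S^\ell$ and every $T_k^\ell$ --- stays in a fixed compact set. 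Compactness of orbit closures is exactly what the asymptotically-autonomous reduction requires.

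For the base case $n=1$, after the substitution $N_1^\ell=S^\ell+T_1^\ell$ the totals satisfy the decoupled linear system \eqref{eq-n-1_N} (whose coefficient matrix is a strictly diagonally dominant $Z$-matrix) and converge exponentially to positive constants $N_1^{\ell*}$; the remaining equation is then asymptotically autonomous with limit a cooperative $p$-dimensional Lotka--Volterra patch system, and the claim is precisely Proposition~\ref{pro21} together with the reduction. For the inductive step, assume the theorem for $n-1$ strains, for arbitrary positive birth/death parameters and any irreducible connectivity matrix. Given $n$ strains, $N_n^\ell(t)\to N_n^{\ell*}>0$ exponentially as above, and the equation for the strongest strain is decoupled from $T_1^\ell,\dots,T_{n-1}^\ell$ and asymptotically autonomous with limit the cooperative patch system \eqref{eq-n-2_2}. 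Since $M_n$ differs from $M$ only along the diagonal it inherits irreducibility, so Proposition~\ref{pro21} applies; combined with the reduction --- using the exponential rate of $N_n^\ell\to N_n^{\ell*}$, the boundedness just noted, and, when $s(M_n)>0$, the robust uniform persistence of the cooperative system, which confines the trajectory to a compact subset of $\mathbb{R}_+^p\setminus\{0\}$ because the data lie in $\Gamma_0$ --- this gives $T_n^\ell(t)\to T_n^{\ell*}$, with $T_n^{\ell*}=0$ if $s(M_n)\le0$ and $T_n^{\ell*}>0$ if $s(M_n)>0$.

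Substituting these limits yields the system \eqref{eq-(n-1)-1}, which in the variables $\bigl(N_{n-1}^\ell-\sum_{j\le n-1}T_j^\ell,\ T_1^\ell,\dots,T_{n-1}^\ell\bigr)_\ell$ is exactly an $(n-1)$-strain instance of \eqref{eq-n}, with coefficients $b^\ell_{(1)},B^\ell_{(1)}$ positive by \eqref{cond-0} and the same irreducible connectivity matrix; the inductive hypothesis then furnishes an equilibrium $\widehat E^*$, built constructively from $s(M_1),\dots,s(M_{n-1})$, that is globally asymptotically stable with respect to its interior. Because $T_n^\ell(t)\to T_n^{\ell*}$ exponentially, the subsystem $\bigl(N_{n-1}^\ell,T_1^\ell,\dots,T_{n-1}^\ell\bigr)_\ell$ of the $n$-strain system is asymptotically autonomous with \eqref{eq-(n-1)-1} as its limit, and its projection stays in the relevant interior since the $n$-strain solution started in $\Gamma_0$; one more application of the reduction gives convergence of these coordinates to $\widehat E^*$. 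Reassembling $\widehat E^*$ with $T_n^{\ell*}$, and recovering $S^{\ell*}=N_1^{\ell*}-T_1^{\ell*}$ at the bottom of the recursion, produces an equilibrium $E^*\in\Gamma$ of \eqref{eq-n}, determined by the signs of the iteratively computed $s(M_1),\dots,s(M_n)$, toward which every solution with data in $\Gamma_0$ converges; Lyapunov stability of $E^*$ for \eqref{eq-n} is inherited level by level from the stability of each limiting equilibrium, which yields global asymptotic stability with respect to $\Gamma_0$.

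The hard part will be the careful bookkeeping of the successive asymptotically-autonomous reductions: at each level one must check that the tracked solution component has compact closure and, in the persistent case $s(M_k)>0$, stays uniformly away from the portion of the boundary lying outside the basin of the limiting equilibrium --- this is why the whole argument is run on $\Gamma_0$ and why one needs the uniform persistence of the cooperative patch Lotka--Volterra systems to be robust under vanishing perturbations --- so that a globally asymptotically stable equilibrium of an autonomous limit system really does capture the asymptotics of the non-autonomous original, and so that the $n$ reductions compose, the exponential convergence of each $N$-limit being precisely what makes each successive limiting system genuinely asymptotically autonomous.
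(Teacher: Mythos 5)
Your reduction scheme and the attractivity part are essentially sound and run parallel to the paper's argument, though with a different technical engine: you invoke the theory of asymptotically autonomous systems plus robust uniform persistence of the cooperative patch Lotka--Volterra limit systems, whereas the paper squeezes the non-autonomous equation for $T_{n-q}^\ell$ between two $\varepsilon$-perturbed autonomous systems and applies the quasimonotone comparison theorem (Theorem 4.1 of Hirsch--Smith), letting $\varepsilon\to0$. Both routes work; yours needs the chain-transitivity form of the Markus--Thieme theorem to rule out the omega-limit set being the (unstable) zero equilibrium when $s(M_{n-q})>0$, which is exactly why you correctly flag the need for persistence, while the paper's comparison argument gets the positive lower bound for free from the positive equilibrium of the lower comparison system. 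You are also right that irreducibility of each $M_{n-q}$ is inherited from $M$, and your identification of \eqref{eq-(n-1)-1} as an $(n-1)$-strain instance of the same structure matches the paper.

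The genuine gap is the final sentence: ``Lyapunov stability of $E^*$ \dots is inherited level by level from the stability of each limiting equilibrium.'' This is asserted, not proved, and it is precisely the non-trivial point --- global attractivity established coordinate block by coordinate block does not by itself yield Lyapunov stability of the assembled equilibrium, and composing asymptotically autonomous reductions only controls limits, not uniform-in-initial-condition bounds for all $t>0$. The paper devotes the longest and most delicate part of its proof to exactly this step: assuming $E_{k+1}$ is unstable, it extracts from a sequence of escaping orbits a full (entire) orbit whose $\alpha$-limit set contains $E_{k+1}$ and whose $\omega$-limit set is $\{E_{k+1}\}$; it then uses the stability of the decoupled $(T_{k+1}^1,\dots,T_{k+1}^p)$ subsystem to show this orbit must lie in the invariant subspace $\{T_{k+1}^\ell=\bar T_{k+1}^\ell\}$, where the flow coincides with the $p(k+1)$-dimensional reduced system, contradicting the inductive stability of $E_k$. (Indeed, the need for such an argument is the very reason Section 4 of this paper issues a corrigendum to the earlier one-patch proof.) To complete your proposal you would need to supply an argument of this kind --- or some other mechanism converting levelwise attractivity plus levelwise stability of the decoupled blocks into stability of the full equilibrium --- rather than treating it as automatic.
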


\begin{proof}[Proof of Theorem \ref{thm22}]

The main part of the proof consists of the above description of the steps of the procedure.
There is one point left to be shown: we have to prove that in each step, when we substitute the limits $N_k^{\ell*}$, resp.\ $T_k^{\ell*}$ into the equations, the dynamics of the resulting system is indeed equivalent to that of the preceding one. 

We summarize the steps of the procedure in the following.
\begin{enumerate}
\item We obtain $N_n^{\ell\ast}\  (\ell=1,\dots,p)$  from the linear system  \eqref{linP}.
\item We substitute the limits $N_n^{\ell\ast}\  (\ell=1,\dots,p)$    into the equations \eqref{eq-n-1_Tn} to obtain the equations \eqref{eq-n-2_2}. 
\item We obtain the limits $T_{n}^{\ell*}\  (\ell=1,\dots,p)$ of the Lotka--Volterra system \eqref{eq-n-2_2}. 
\item We 
create the new variables ${N}_{n-1}^{\ell}(t),\ \ell=1,\dots,p$  and parameters $b_{(1)}^\ell,B_{(1)}^\ell$, $\ell=1,\dots,p$. 
\item We substitute the limits 
$T_{n}^{\ell*}\  (\ell=1,\dots,p)$ into the equations \eqref{eq-n-1-1} to obtain the reduced system \eqref{eq-(n-1)-1} which has the same  structure as the original one \eqref{eq-n-1}. 
\item We repeat this cycle $n-1$ times, with the indices decreased by 1 every time.
\end{enumerate}
For the validity of Step 3 in the $q$th cycle, we need to verify that $M_{n-q}$ is irreducible. Since $M_{n-q}=\!M+\operatorname{diag}[c_{n-q}^1,\dots,c_{n-q}^p]$ and we assumed that $M$ is irreducible, $M_{n-q}$ is also irreducible.

To obtain that in each case, the limit of the solutions of the resulting system after the substitution will be the same equilibrium as the limit of the solutions of the original system, we will apply Theorem 4.1 of Hirsch and Smith \cite{hirsch}. To apply this theorem, we recall the quasimonotone condition \cite{hirsch}  for  a differential equation $x'(t)=f(t,x(t))$: we say that the time-dependent vector field $f\colon J\times D\to\mathbb{R}^n$ (where $J\subset\mathbb{R}$ and $D\subset \mathbb{R}^n$) satisfies the quasimonotone condition in~$D$ if for all $(t,y),(t,z)\in J\times D$,  we have
$$y\leq z \quad\mbox{and} \quad y_i=z_i\quad \mbox{implies}\quad  f_i(t,y)\leq f_i(t,z).$$
According to Theorem 4.1 of Hirsch and Smith \cite{hirsch},  if $f,g\colon J\times D \to \mathbb{R}^n$ are continuous, Lipschitz on each compact subset of $D$,  at least one of them satisfies the quasimonotone condition, and $f(t,y)\leq g(t,y)$ for all $(t,y)\in J\times D$, then $$y,z\in\mathbb{R}^n,\ y\leq z \quad\mbox{implies}\quad x(t;t_0,y)\leq x(t;t_0,z) \quad \mbox{for all } t>t_0,$$
where $x(t;t_0,y)$ denotes the solution of $x'(t)=f(t,x(t))$ started from $y$ at $t=t_0$.


To show that the limits  $T_k^{\ell*}$ obtained during the procedure by substituting the limits of \eqref{eq-(n-q)-2-new} into \eqref{eq-(n-q)-2} are the same as the limit of the variables $T_k^\ell$, $k=1,\dots,n$, $\ell=1,\dots,p$ in the original system, we will	 use an induction argument. It is clear from the above that the claim is true for $k=n$. Let us now suppose that the claim  is not true for all $T_k^\ell(t)$, then there exists a largest index $1\leq r\leq n-1$ such that $T_r^{m*}$ is not equal to the limit of $T_r^m(t)$ in the original system for some $1\leq m\leq p$. The limits $T_r^{\ell*}$ are obtained by first substituting the limits $T_{r+1}^{\ell*}$ into the equations for $T_j^\ell(t)$, $1\leq j\leq r$ and then substituting the limits $N_r^{\ell*}$ into the equations for $T_r^\ell(t)$, hence, we have to compare the limits of the two systems
\begin{align}
\frac{\mathrm{d}T_{r}^\ell(t)}{\mathrm{d}t}={}&\biggl( N_{r+1}^\ell(t)-2T_{r+1}^\ell(t)-T_{r}^\ell(t)\biggr) \beta_{rr}^\ell T_{r}^\ell(t)-\left(b^\ell_{(n-r+1)}+\theta_{r}^{\ell}\right) T_{r}^\ell(t) +\sum_{i=1}^p(1-\delta_{\ell i})\left\{m_{\ell i} T_{r}^i(t)-m_{i\ell} T_{r}^\ell(t)\right\} \nonumber \\
={}&\biggl( N_{r}^\ell(t)-T_{r+1}^\ell(t)-T_{r}^\ell(t)\biggr) \beta_{rr}^\ell T_{r}^\ell(t)-\left(b^\ell_{(n-r+1)}+\theta_{r}^{\ell}\right) T_{r}^\ell(t) +\sum_{i=1}^p(1-\delta_{\ell i})\left\{m_{\ell i} T_{r}^i(t)-m_{i\ell} T_{r}^\ell(t)\right\}\label{15}
\shortintertext{and}
		\frac{\mathrm{d}T_{r}^\ell(t)}{\mathrm{d}t}={}&	\biggl( N_{r}^{\ell*}- T_{r}^\ell(t)\biggr)\beta_{rr}^\ell T_{r}^\ell(t)	-\left(b^\ell_{(n-r)}+\theta_{r}^{\ell}\right)T_{r}^\ell(t)	 +\sum_{\ell=1}^p\label{16} (1-\delta_{\ell i})\left\{m_{\ell i} T_{r}^i(t)-m_{i\ell} T_{r}^\ell(t)\right\}, \\& \ell=1,2,\ldots,p.\nonumber
\end{align}
We know that $N_r^{\ell}(t)$ ($\ell=1,\dots,p$) converge to $N_{r}^{\ell*}$ ($\ell=1,\dots,p$), while from the definition of $r$  we have that   $T_{r+1}^{\ell}(t)$ ($\ell=1,\dots,p$) converge to $T_{r+1}^{\ell*}$ ($\ell=1,\dots,p$). Then, for any $\varepsilon>0$, there exists a $\bar{t}>0$ such  that $|N_{r}^{\ell}(t)-N_{r}^{\ell*}|<\varepsilon$ and  $|T_{r+1}^{\ell}(t)-T_{r+1}^{\ell*}|<\varepsilon$ for all $t>\bar{t}$, $\ell=1,\dots,p$.  If we substitute $T_{r+1}^{1*}+\varepsilon,\dots ,T_{r+1}^{p*}+\varepsilon,N_{r}^{1*}-\varepsilon,\dots,N_{n-q}^{p*}-\varepsilon$, resp.\ $T_{r+1}^{1*}-\varepsilon,\dots ,T_{r+1}^{p*}-\varepsilon,N_{r}^{1*}+\varepsilon,\dots,N_{n-q}^{p*}+\varepsilon$ into \eqref{15}, we obtain two systems of the same structure as \eqref{16}, and one of them is a lower, the other is an upper estimate of  \eqref{15}, and each has a globally asymptotically stable equilibrium $(\underline{T}_{r}^{1}(\varepsilon),\dots,\underline{T}_{r}^{p}(\varepsilon))$, resp.\ $(\overline{T}_{r}^{1}(\varepsilon),\dots,\overline{T}_{r}^{p}(\varepsilon))$ because of Proposition~\ref{pro21}. It is easy to see that the original system \eqref{15}, considered as a nonautonomous system with time-dependent coefficients $T_{r+1}^1(t),\dots,T_{r+1}^p(t),N_{r}^1(t),\dots,N_{r}^p(t)$, satisfies the quasimonotone condition, as well as the systems obtained after the substitution. 
 Hence we can apply Theorem 4.1 of Hirsch and Smith \cite{hirsch} to obtain that for any solution $(T_{r}^{1}(t),\dots,T_{r}^{p}(t))$ of \eqref{15},
\begin{equation}\label{comp}
\underline{T}_{r}^{\ell}(\varepsilon) \leq \liminf_{t\to\infty} T_{r}^{\ell}(t)\leq\limsup_{t\to\infty} T_{r}^{\ell}(t)\leq \overline{T}_{r}^{\ell}(\varepsilon),\qquad \ell=1,\dots,p.
\end{equation}
Solutions of  limit equation \eqref{16} converge to a globally asymptotically stable equilibrium  by Proposition~\ref{pro21}, and by letting $\varepsilon\to 0$ we find that  this limit is the  same  as that of \eqref{15}.

As we have assumed that for all larger indices, the limits of the compartments  of the original system \eqref{eq-n-1} are equal to the limits obtained during the procedure, using  the equations for  $T_r^1(t),\dots,T_r^p(t)$ after $n-r+1$ cycles of the procedure satisfy the quasimonotone condition and the comparison \eqref{comp}, the limits obtained for these have to coincide with those of the original system (for $r=n$, the statement follows directly).

To prove that not only attractivity, but also global asymptotic stability holds
, we will again use induction. Let $E=(\bar{S}^1,\bar{T}_1^1,\dots,\bar{T}_n^1,\dots,\bar{S}^p,\bar{T}_1^p,\dots,\bar{T}_n^p)$ denote the  equilibrium  obtained at the end of the procedure,  where $\bar{T}_i^j=0$ or \mbox{$\bar{T}_i^j>0$}  depending on the stability moduli $(s(M_1),s(M_2),\ldots,s(M_n))$ and let $E_k=(\bar{S}^1,\bar{T}_1^1,\dots,\bar{T}_k^1,\dots,\bar{S}^p,\bar{T}_1^p,\dots,\bar{T}_k^p)$ be the equilibrium of the $p(k+1)$-dimensional system obtained during the procedure, consisting of the first $p(k+1)$ coordinates of~$E$. Let us suppose that $E_k$ is a stable equilibrium of the $p(k+1)$-dimensional reduced system for some $k\leq n$.
We will show that in each step, $E_{k+1}$  is a stable equilibrium of the $p(k+2)$-dimensional reduced system. Suppose  this does not hold, i.e.\ $E_{k+1}$ is unstable. In this case there exists an $\ep>0$ and is a sequence $\{x_m\}\to E_{k+1}$, $|x_m-E_{k+1}|<1/m$ such that the orbits started from the points of the sequence leave $B(E_{k+1},\ep)\coloneqq \{\,x\in\mathbb{R}_+^{(k+2)p} : |x-E_{k+1}|\leq \ep\,\}$. Let us denote by $x_m^{\ep}$ the first exit point from $B(E_{k+1},\ep)$ of the solution started from $x_m$, reached at time $\tau_m$. There is a convergent subsequence of the sequence $x_m^{\ep}$ (still denoted by $x_m^{\ep}$) which tends to a point denoted by $x_{\ep}^*\in S(E_{k+1},\ep)\coloneqq \{\,x\in\mathbb{R}_+^{(k+2)p} : |x-E_{k+1}|= \ep\,\}$. 
We will show that the  $E_{k+1}\in\alpha(x_{\ep}^*)$. For this end, let us consider the set $S(E_{k+1},\frac{\ep}{2})$. 
Clearly, all solutions started from the points $x_m$ (we drop the first elements of the sequence, if necessary) will leave the set $B(E_{k+1},\frac{\ep}{2})$. 
We denote the last exit point of each trajectory from this set  before time $\tau_m$, respectively, by $x_m^{\scriptscriptstyle{\ep/2}}$. Also this sequence has a convergent subsequence (still denoted the same way), let us denote its limit by  $x_{\scriptscriptstyle{\ep/2}}^*$. 
We will show that the trajectory started from $x_{\scriptscriptstyle{\ep/2}}^*$ goes through $x_\varepsilon^*$.  As $E_{k+1}$ is globally attractive,  this trajectory will eventually enter $S(E_{k+1},\frac{\ep}{4})$ at some time $T>0$.  Let us suppose that the trajectory started from $x_{\scriptscriptstyle{\ep/2}}^*$ does not go through $x_\varepsilon^*$ and let us denote by $d>0$ the distance of this trajectory from $x_\varepsilon^*$.
For continuity reasons, there is an $N\in\mathbb{N}$ so that for any  $m>N$, $|x_{\scriptscriptstyle{\ep/2}}^*t-x_m^{\scriptscriptstyle{\ep/2}}t|<\max\{\frac{d}{2},\frac{\ep}{8}\}$ for $0<t<T$. This means that for $m$ large enough, the trajectory started from $x_m^{\scriptscriptstyle{\ep/2}}$ will enter again  $S(E_{k+1},\frac{\ep}{2})$ without getting close to $x_\varepsilon*$ which contradicts either $x_m^\varepsilon$ being the first exit point from $B(E_{k+1},\varepsilon)$ or $x_m^{\scriptscriptstyle{\ep/2}}$ being the last exit point before $\tau_m$ from $B(E_{k+1},\frac{\varepsilon}{2})$. Hence, we have shown that the trajectory started from $x_{\scriptscriptstyle{\ep/2}}^*$ goes through $x_\varepsilon^*$. 
Proceeding like this (taking neighbourhoods of radius $\ep/4,\ \ep/8$ etc.) we obtain that the backward trajectory of $x_{{\ep}}^*$  enters any small neighbourhood of $E_{k+1}$ as $t\to-\infty$, hence, $E_{k+1}\in\alpha(x_\varepsilon^*)$, while it follows from the global attractivity of $E_{k+1}$ that the $\omega$-limit set of the trajectory is  $\{E_{k+1}\,\}$.
Let us denote this trajectory by $\gamma(x_\varepsilon^*)$

We know that the equations for $T_{k+1}^1(t),\dots,T_{k+1}^p(t)$ and $N_{k+1}^1(t),\dots,N_{k+1}^p(t)$ can be decoupled from the rest of the equations and using the exponential stability of the limits \eqref{limit1} and Proposition \ref{pro21} we obtain that $\bar T_{k+1}^1,\dots,\bar T_{k+1}^p$ is a stable equilibrium of the system consisting of the equations for  $\frac{\mathrm{d}}{\mathrm{d}t}{T_{k+1}^1}(t),\dots,\frac{\mathrm{d}}{\mathrm{d}t}{T_{k+1}^p}(t)$. 
Therefore, the equilibrium $E_{k+1}$ is stable in the  coordinates $T_{k+1}^1,\dots,T_{k+1}^p$ in the sense that for any $\tilde{\varepsilon}>0$ there exists a $\tilde{\delta}(\tilde{\varepsilon})>0$ such that for any initial value $x$ with $|x-E_{k+1}|<\tilde{\delta}$, $|T_{k+1}^\ell(t)-\bar T_{k+1}^\ell|<\tilde{\varepsilon}$ for all $t>0$ and $\ell=1,\dots,p$.
 Thus, the  trajectory $\gamma(x_\varepsilon^*)$ obtained above lies entirely  in the subspace $\{T_{k+1}^1=\bar{T}_{k+1}^1,\dots,T_{k+1}^p=\bar{T}_{k+1}^p\}$. On the other hand,  the current $p(k+2)$-dimensional system coincides with the $p(k+1)$-dimensional system on this subspace. For the latter system,  stability of  the equilibrium $E_k$ follows from the induction assumption. However, the existence of an orbit whose $\omega$-limit set is $\{E_{k+1}\}$   and whose $\alpha$-limit set contains $E_{k+1}$ contradicts the stability of the equilibrium $E_k$. This  implies the global asymptotic stability of the equilibrium of the $p(k+2)$-dimensional system. 

 For $k=1$, the assertion holds trivially, hence, repeating the inductive step we obtain   global asymptotic stability of the equilibrium $E$. 
 \end{proof}
 
 \section{Corrigendum of Theorem 2.2 of [\textit{Math Biosci Eng.} 2017;{14}:421--435]}
 In this section, we consider the special case of one patch examined in D\'enes, Muroya and R\"ost \cite{DMR} and give  a correction to the proof of Theorem 2.2 of that paper.
 First, we recall this theorem  about the globally asymptotically stable equilibrium of the multistrain SIS model
 \begin{equation}\label{regi}
 	\begin{aligned}
 		\frac{\mathrm{d}S(t)}{\mathrm{d}t}={}&B-b S(t)- S(t) \sum_{k=1}^n \beta_{kk} T_k(t)+\sum_{k=1}^n \theta_k T_k(t), \\
 		\frac{\mathrm{d}T_k(t)}{\mathrm{d}t}={}&S(t) \beta_{kk} T_k(t)+T_k(t) \sum_{j=1}^n (1-\delta_{kj}) \beta_{kj} T_j(t)-(b+\theta_k+\delta_{kn}d_n) T_k(t), \qquad k=1,2,\dots,n,
 	\end{aligned}
 \end{equation}
 with initial conditions
 \begin{equation*}
 	\begin{split}
 		& S\left( 0\right) =\phi_0, \qquad T_k\left( 0\right) =\phi_k, \qquad k=1,2,\dots, n, \\
 		&\left( \phi_0,\phi_1,\phi_2,\dots,\phi_n \right) \in \Gamma,
 	\end{split}
 \end{equation*}
 where $\delta_{kj}$ denotes the Kronecker delta such that $\delta_{kj}=1$ if $k=j$ and $\delta_{kj}=0$ otherwise, and $\Gamma=[0,\infty)^{n+1}$.
 We assume that the conditions
 \begin{equation*}
 	\begin{aligned}
 		\beta_{kj}&=\beta_{kk}, &\quad 1 &\leq j \leq k, \\
 		\beta_{kj}&=-\beta_{jk}=-\beta_{jj}, &\quad k+1 &\leq j \leq n,
 	\end{aligned}
 \end{equation*}
 hold for the infection rates for $k=1,2,\ldots,n$, i.e.\ we assume that the $k$-th strain infects those who are infected
 by a milder strain (including the non-infected) with the same rate. The notation $d_n$ stands for disease-induced death rate for the most infectious strain. 

In our previous work \cite{DMR}, we gave an iterative procedure (similar to the one introduced in Section 3 of the present paper) to calculate a sequence of reproduction numbers which completely determines the global dynamics of the system. In the  general step of the procedure we consider the system
\begin{equation}\label{lsteps-1}
\begin{aligned}
\frac{\mathrm{d}S(t)}{\mathrm{d}t}={}&B^{(\ell)}-b^{(\ell)} S(t)- S(t) \sum_{k=1}^{n-\ell} \beta_{kk} T_k(t)+\sum_{k=1}^{n-\ell} \theta_k T_k(t), \\
\frac{\mathrm{d}T_{k}(t)}{\mathrm{d}t}={}& S(t) \beta_{kk} T_k(t)+T_k(t) \sum_{j=1}^{n-\ell} (1-\delta_{kj}) \beta_{kj} T_j(t) -\left(b^{(\ell)}+{\theta}_k\right) T_k(t), \qquad k=1,2,\dots,n-\ell-1, \\
\end{aligned}
\end{equation}
and
\begin{equation}\label{lsteps-2}
\begin{aligned}
\frac{\mathrm{d}T_{n-\ell}(t)}{\mathrm{d}t}={}&S(t) \beta_{n-\ell,n-\ell} T_{n-\ell}(t)+T_{n-\ell}(t)\sum_{j=1}^{n-\ell} (1-\delta_{n-\ell,j}) \beta_{n-\ell,j} T_j(t)-\left(b^{(\ell)}+{\theta}_{n-\ell}\right) T_{n-\ell}(t), \\
\frac{\mathrm{d}N_{n-\ell}(t)}{\mathrm{d}t}={}&B^{(\ell)}-b^{(\ell)} N_{n-\ell}(t),
\end{aligned}
\end{equation}
where
$$N_{n-\ell}(t)=S(t)+\sum_{k=1}^{n-\ell}T_k(t),$$
$B^{(0)}=B, \ b^{(0)}=b$ and 
we define  $$\mathcal{R}_0^{(n-\ell)}\coloneqq \frac{B^{(\ell)} \beta_{n-\ell,n-\ell}}{b^{(\ell)}(b^{(\ell)}+\theta_{n-\ell})}$$ and
\begin{equation*}
B^{(\ell)}\coloneqq B^{(\ell-1)}+\theta_{n-\ell+1} T_{n-\ell+1}^*, \qquad b^{(\ell)}\coloneqq  b^{(\ell-1)}+\beta_{n-\ell+1,n-\ell+1}T_{n-\ell+1}^*,
\end{equation*}
if $\mathcal{R}_0^{(n-\ell)}>1$ and
\begin{equation*}
B^{(\ell)}\coloneqq B^{(\ell-1)}, \qquad b^{(\ell)}\coloneqq b^{(\ell-1)},
\end{equation*}
if $\mathcal{R}_0^{(n-\ell)} \leq 1$.

Now we introduce $U_{n-\ell}(t)=B^{(\ell)}/b^{(\ell)}-N_{n-\ell}(t)$, to rewrite the equation \eqref{lsteps-2} as
\begin{equation}\label{red_n-l}
\begin{aligned}
\frac{\mathrm{d}T_{n-\ell}(t)}{\mathrm{d}t}={}&\beta_{n-\ell,n-\ell}T_{n-\ell}(t)\left(\frac{B^{(\ell)}}{b^{(\ell)}}-\frac{b^{(\ell)}+{\theta}_{n-\ell}}{\beta_{n-\ell,n-\ell}}-T_ {n-\ell}(t)-U_{n-\ell}(t)\right), \\
\frac{\mathrm{d}U_{n-\ell}(t)}{\mathrm{d}t}={}&-b^{(\ell)} \ U_{n-\ell}(t).
\end{aligned}
\end{equation}
Again, \eqref{red_n-l} might be decoupled from the other equations \eqref{lsteps-1}.
For $\mathcal{R}_0^{(n-\ell)} \leq 1$, system \eqref{red_n-l} has only the trivial equilibrium $(0,0)$. But for $\mathcal{R}_0^{(n-\ell)}>1$, system \eqref{red_n-l} has two equilibria: the trivial equilibrium $(0,0)$ and the non-trivial equilibrium
$$(T_{n-\ell}^*,U_{n-\ell}^*)=\left(\frac{B^{(\ell)}\beta_{n-\ell,n-\ell}-(b^{(\ell)}+\theta_{n-\ell})b^{(\ell)}}{\beta_{n-\ell,n-\ell}b^{(\ell)}},0 \right),$$ which only exists if $$\mathcal{R}_0^{(n-\ell)}>1.$$

Then, from \eqref{lsteps-1}, we obtain the systems
\begin{equation*}\label{lsteps-3}
\begin{aligned}
\frac{\mathrm{d}S(t)}{\mathrm{d}t}={}&B^{(\ell+1)}-b^{(\ell+1)} S(t)- S(t) \sum_{k=1}^{n-\ell-1} \beta_{kk} T_k(t)+\sum_{k=1}^{n-\ell-1} \theta_k T_k(t), \\
\frac{\mathrm{d}T_{k}(t)}{\mathrm{d}t}={}& S(t) \beta_{kk} T_k(t)+T_k(t) \sum_{j=1}^{n-\ell-1} (1-\delta_{kj}) \beta_{kj} T_j(t) -\left(b^{(\ell+1)}+{\theta}_k\right) T_k(t), \qquad k=1,2,\dots,n-\ell-2, \\
\end{aligned}
\end{equation*}
and
\begin{equation*}\label{lsteps-4}
\begin{aligned}
\frac{\mathrm{d}T_{n-\ell-1}(t)}{\mathrm{d}t}={}&S(t) \beta_{n-\ell-1,n-\ell-1} T_{n-\ell-1}(t)+T_{n-\ell-1}(t)\sum_{j=1}^{n-\ell-1} (1-\delta_{n-\ell-1,j}) \beta_{n-\ell-1,j} T_j(t)-\left(b^{(\ell+1)}+{\theta}_{n-\ell-1}\right) T_{n-\ell-1}(t), \\
\frac{\mathrm{d}N_{n-\ell-1}(t)}{\mathrm{d}t}={}&B^{(\ell+1)}-b^{(\ell+1)} N_{n-\ell-1}(t),
\end{aligned}
\end{equation*}
where $$N_{n-\ell-1}(t)=S(t)+\sum_{k=1}^{n-\ell-1}T_k(t),$$
$B^{(0)}=B, \ b^{(0)}=b$ and 
 we define  $$\mathcal{R}_0^{(n-\ell-1)}\coloneqq \frac{B^{(\ell+1)} \beta_{n-\ell-1,n-\ell-1}}{b^{(\ell+1)}(b^{(\ell+1)}+\theta_{n-\ell-1})}$$ and
\begin{equation*}
B^{(\ell+1)}\coloneqq B^{(\ell)}+\theta_{n-\ell} T_{n-\ell}^*, \qquad b^{(\ell+1)}\coloneqq b^{(\ell)}+\beta_{n-\ell,n-\ell}T_{n-\ell}^*,
\end{equation*}
if $\mathcal{R}_0^{(n-\ell-1)}>1$ and
\begin{equation*}
B^{(\ell+1)}\coloneqq B^{(\ell)}, \qquad b^{(\ell+1)}\coloneqq b^{(\ell)},
\end{equation*}
if $\mathcal{R}_0^{(n-\ell-1)} \leq 1$, which, again, are systems with the same structure. In the end, we arrive at the two-dimensional system
\begin{equation*}
\begin{split}
\frac{\mathrm{d}S(t)}{\mathrm{d}t}={}& B^{(n-1)}-b^{(n-1)}S(t)-\beta_{11}S(t)T_1(t)+\theta_1T_1(t),\\
\frac{\mathrm{d}T_1(t)}{\mathrm{d}t}={}&\beta_{11}S(t)T_1(t)-(b^{(n-1)}+\theta_1)T_1(t),
\end{split}
\end{equation*}
which has the two equilibria $$\left(\frac{B^{(n-1)}}{b^{(n-1)}},0\right)\quad\mbox{and}\quad\left(\frac{b^{(n-1)}+\theta_1}{\beta_{11}},\frac{B^{(n-1)}}{b^{(n-1)}}-\frac{b^{(n-1)}+\theta_1}{\beta_{11}}\right),$$
with the latter one only existing if $$\mathcal{R}_0^{(n)}\coloneqq\frac{B^{(n-1)}\beta_{11}}{b^{(n-1)}(b^{(n-1)}+\theta_1)}>1.$$
 The dynamics of this system can be determined in a similar way as in the  case of \eqref{red_n-l}, and we obtain that the first equilibrium is globally asymptotically stable if $\mathcal{R}_0^{(n)}\leq 1$ and the second one is globally asymptotically stable if $\mathcal{R}_0^{(n)}> 1$.

 \begin{theorem}[Theorem 2.2 of D\'enes, Muroya, R\"ost~\cite{DMR}]
 The multistrain SIS model \eqref{regi} (equation $(1)$ in D\'enes, Muroya, R\"ost~\cite{DMR}) has a globally asymptotically stable
 equilibrium on the region $\Gamma_0$, where $\Gamma_0$ is the interior of $\ \Gamma$. The global dynamics
 is completely determined by the threshold parameters $\mathcal{R}_0^{(1)},\dots,\mathcal{R}_0^{(n)}$,
 which
 can be obtained iteratively and determine which one of the equilibria is globally
 asymptotically stable.
 \end{theorem}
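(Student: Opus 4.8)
The statement is the special case $p=1$ of Theorem~\ref{thm22}, so the plan is to specialize the argument given there. The only structural discrepancy between \eqref{regi} and \eqref{eq-n} with $p=1$ is the disease-induced death term $\delta_{kn}d_n$, which occurs only in the equation for the most virulent strain $T_n$; it merely replaces $b+\theta_n$ by $b+\theta_n+d_n$ in the first decoupled subsystem (namely \eqref{red_n-l} with $\ell=0$) and so is harmless for everything that follows. I would organize the proof into two parts: (i) global attractivity on $\Gamma_0$ of the equilibrium $E$ produced by the iterative procedure described above, and (ii) the upgrade of attractivity to global asymptotic stability.

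For part~(i): at each stage $\ell=0,1,\dots,n-1$ the pair \eqref{red_n-l} for $(T_{n-\ell},U_{n-\ell})$ decouples from the remaining equations \eqref{lsteps-1}, and the $1\times1$ analogue of Proposition~\ref{pro21} is elementary --- $U_{n-\ell}(t)\to0$ exponentially, while the scalar logistic limit equation for $T_{n-\ell}$ has a globally asymptotically stable equilibrium, equal to $0$ when $\mathcal{R}_0^{(n-\ell)}\le1$ and to the positive $T_{n-\ell}^{*}$ when $\mathcal{R}_0^{(n-\ell)}>1$. The point that requires care --- and the step where the original proof in \cite{DMR} is incomplete --- is that replacing $T_{n-\ell}(t)$ and $N_{n-\ell}(t)$ by their limits inside \eqref{lsteps-1} does not alter the limiting behaviour of the remaining compartments. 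I would establish this by downward induction on the strain index $r$, exactly as in Section~\ref{secmain}: the non-autonomous scalar equation for $T_r$, with time-dependent inputs $N_r(t)\to N_r^{*}$ and $T_{r+1}(t)\to T_{r+1}^{*}$, is sandwiched for every $\ep>0$ and all large $t$ between two autonomous logistic-type equations obtained by freezing those inputs at $N_r^{*}\pm\ep$ and $T_{r+1}^{*}\mp\ep$; since a scalar equation trivially satisfies the quasimonotone condition, Theorem~4.1 of Hirsch and Smith \cite{hirsch} yields the two-sided estimate \eqref{comp}, and letting $\ep\to0$ identifies $\lim_{t\to\infty}T_r(t)$ with $T_r^{*}$. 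Carrying this through all indices $r=n,n-1,\dots,1$ gives that $E$ is globally attractive on $\Gamma_0$, and the thresholds $\mathcal{R}_0^{(1)},\dots,\mathcal{R}_0^{(n)}$ of the iteration determine which of its coordinates vanish.

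For part~(ii): I would reproduce verbatim the $\alpha$-limit-set argument from the proof of Theorem~\ref{thm22}. Assuming inductively that the $(k+1)$-dimensional reduced system has $E_k$ as a stable equilibrium, one supposes the $(k+2)$-dimensional reduced system had $E_{k+1}$ unstable and, using global attractivity, constructs a full orbit $\gamma$ whose $\omega$-limit set is $\{E_{k+1}\}$ and whose $\alpha$-limit set contains $E_{k+1}$; since the equation for $T_{k+1}$ decouples and $\bar T_{k+1}$ is a stable equilibrium of that scalar equation, $\gamma$ lies in the invariant hyperplane $\{T_{k+1}=\bar T_{k+1}\}$, on which the $(k+2)$-dimensional system restricts precisely to the $(k+1)$-dimensional one, so the existence of $\gamma$ contradicts the stability of $E_k$. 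Starting the induction from the two-dimensional system $(S,T_1)$, for which global asymptotic stability is a standard computation, then yields global asymptotic stability of $E$. The main obstacle is exactly the comparison step of part~(i) together with the attractivity-versus-stability distinction in part~(ii) --- these are the two gaps in the original argument of \cite{DMR} that the present proof is meant to repair; the rest is routine precisely because, for $p=1$, all the decoupled subsystems are genuinely one-dimensional.
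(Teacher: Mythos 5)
Your proposal is correct and follows essentially the same route as the paper's corrigendum: the key repair --- showing that substituting the limits $T_{r+1}^{*}$ and $N_{r}^{*}$ into the equation for $T_{r}$ does not alter its limiting behaviour --- is carried out in the paper by exactly the $\pm\varepsilon$-sandwich comparison you describe (phrased there as a contradiction with a maximal non-converging index $k$ rather than as downward induction, which is the same argument), while your part~(ii) spells out the stability upgrade that the paper simply defers to the original proof in \cite{DMR}. One minor inaccuracy: for the first stage $\ell=0$ the disease-induced death term does not merely shift $b+\theta_{n}$ to $b+\theta_{n}+d_{n}$, since it also contributes $-d_{n}T_{n}(t)$ to the equation for $N_{n}$, so $(T_{n},U_{n})$ is a genuinely coupled planar system (handled in \cite{DMR} by a phase-plane argument) rather than a triangular one with $U_{n}\to 0$ a priori; this does not affect the remainder of your argument.
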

 	 	\begin{proof}
 Let us suppose that there exists a solution started with positive initial values whose limit is not the equilibrium $E$ obtained at the end of the procedure described in D\'enes, Muroya, R\"ost~\cite{DMR}. It follows from the procedure that the last coordinate tends to the last coordinate of~$E$. There exists a maximal index $k$ ($0\leq k\leq n-1$) such that the $k$th coordinate of the solution does not tend to the $k$th coordinate of~$E$, while all coordinates with index larger than $k$ do tend to the corresponding coordinate of $E$.  Let us consider the $k$th equation in the original system:
 $$\frac{\mathrm{d} T_k}{\mathrm{d} t}=S(t)\beta_{kk}T_k(t)+T_k(t)\sum_{i=1}^{k-1}\beta_{kk}T_i(t)-T_k(t)\sum_{i=k+1}^{n}\beta_{ii}T_i(t)-(b+\theta_k)T_k(t).$$ 
 Introducing the notation  $T_0(t)\coloneqq S(t)$, let us define $\tilde N_k(t)$ as 
 $$\tilde N_k(t)\coloneqq \sum_{i=0}^{k}T_k(t)$$
 with respect to the original system.
 
 Hence, we can write the equation for $T_k(t)$ as
 \begin{align}
 \frac{\mathrm{d} T_k}{\mathrm{d} t}={}&\left(\tilde N_k(t)-\sum_{i=1}^{k}T_i(t)\right)\beta_{kk}T_k(t)+T_k(t)\sum_{i=1}^{k-1}\beta_{kk}T_i(t)-T_k(t)\sum_{i=k+1}^{n}\beta_{ii}T_i(t)-(b+\theta_k)T_k(t)\nonumber\\
 ={}&\tilde N_k(t)\beta_{kk}T_k(t)-\beta_{kk}(T_k(t))^2-T_k(t)\sum_{i=k+1}^{n}\beta_{ii}T_i(t)-(b+\theta_k)T_k(t)\label{00}
 \shortintertext{and} 
 \frac{\mathrm{d} \tilde{N}_k}{\mathrm{d} t}={}&B-b\tilde N_k(t)-\tilde{N}_k(t)\sum_{i=k+1}^n\beta_{ii}T_i(t)+\sum_{i=k+1}^{n}\theta_i T_{i}(t)\label{01}.
 \end{align}

 For an arbitrary small $\varepsilon>0$, there exists a $t_1>0$ such that if $t>t_1$, then for all $i>k$, $|T_i(t)-T_i^*|<\frac{\varepsilon}{\max\{\beta_{ii},\theta_i\}n}$. Hence, for the terms multiplied by $\tilde{N}_k(t)$ in  equation \eqref{01},  the following estimates hold for $t>t_1$:
 \begin{align*}
 b+\sum_{i=k+1}^n\beta_{ii}T_i^*-\varepsilon&\leq b+\sum_{i=k+1}^n\beta_{ii}T_i(t)\leq b+\sum_{i=k+1}^n\beta_{ii}T_i^*+\varepsilon,
 \end{align*}
 and for the rest of the terms the estimates
 \begin{align*}
 B+\sum_{i=k+1}^{n}\theta_i T_{i}^*-\varepsilon\leq B+\sum_{i=k+1}^{n}\theta_i T_{i}(t)\leq B+\sum_{i=k+1}^{n}\theta_i T_{i}^*+\varepsilon.
 \end{align*}
 From these, we can get the following estimation for $\frac{\mathrm{d} \tilde N_k}{\mathrm{d} t}$ for $t>t_1$:
 $$B+\sum_{i=k+1}^{n}\theta_i T_{i}^*-\varepsilon -\tilde N_k(t)\left(b+\sum_{i=k+1}^n\beta_{ii}T_i^*+\varepsilon\right)\le\frac{\mathrm{d} \tilde N_k}{\mathrm{d} t}\leq  B+\sum_{i=k+1}^{n}\theta_i T_{i}^*+\varepsilon -\tilde N_k(t) \left(b+\sum_{i=k+1}^n\beta_{ii}T_i^*-\varepsilon \right)$$
 
 Taking into consideration that $b+\sum_{i=k+1}^n\beta_{ii}T_i^*=b^{(n-k)}$ and $B+\sum_{i=k+1}^{n}\theta_i T_{i}^*=B^{(n-k)}$ (see~D\'enes, Muroya, R\"ost\cite{DMR}), using a comparison principle, one obtains that the limit of  equation \eqref{01} is the same as that of the corresponding system during the procedure, let us denote this limit by $\tilde N_k^*$.
 
 From the above estimations and equation \eqref{00}, we obtain that there exists a $t_2>0$ such that for all $t>t_2$ the following estimates can be given for $\frac{\mathrm{d} T_k}{\mathrm{d} t}$:
 \begin{align*}
 \frac{\mathrm{d} T_k}{\mathrm{d} t}&\leq\beta_{kk}(\tilde N_k^*+\varepsilon)T_k(t)-\beta_{kk}(T_k(t))^2-T_k(t)\left(\sum_{i=k+1}^{n}\beta_{ii}T_i^*-\varepsilon\right)-(b+\theta_k)T_k(t)
 \shortintertext{and}
 \frac{\mathrm{d} T_k}{\mathrm{d} t}&\geq\beta_{kk}(\tilde N_k^*-\varepsilon)T_k(t)-\beta_{kk}(T_k(t))^2-T_k(t)\left(\sum_{i=k+1}^{n}\beta_{ii}T_i^*+\varepsilon\right)-(b+\theta_k)T_k(t).
 \end{align*}
 
 Now, using a similar comparison argument as before, one can see that the limit of the solution of the equation for $T_k(t)$ is the same as that of the corresponding equation during the procedure, depending on the same reproduction number. 

 The rest of the proof (the proof of stability) remains the same as given in Theorem 2.2 of D\'enes, Muroya, R\"ost \cite{DMR}.
  	\end{proof}
 
\section*{Acknowledgements}
A.~D\'enes was supported by Hungarian Scientific Research Fund OTKA PD 112463 and National Research, Development and Innovation Office NKFIH KH 125628 and the J\'anos Bolyai Research Scholarship of the Hungarian Academy of Sciences.
Y.~Muroya was supported by Scientific Research (c), No.~24540219 of Japan Society for the Promotion of Science.
G.~R\"ost was supported by the EU-funded Hungarian grant EFOP-3.6.1-16-2016-00008 and Marie Sk\l odowska-Curie Grant No.~748193.

\end{document}